\documentclass[a4paper,12pt,titlepage]{article}
\linespread{1.15}
\usepackage[left=1in,right=1in,top=.8in,bottom=.8in,margin=1in]{geometry}
\usepackage{amssymb}
\usepackage{amsmath}
\usepackage{mathrsfs}
\usepackage{amsfonts}
\usepackage{setspace}
\usepackage{graphicx}

\newtheorem{theorem}{Theorem} 
\newtheorem{proof}{Proof} 
\newtheorem{lemma}{Lemma} 
 
\newtheorem{definition}{Definition}

\onehalfspacing
\parindent=0in
\parskip=8pt

\begin{document}

\title{%
	Descent of varieties using a Groebner basis argument}
\author{
	Deepak Kamlesh}
\date{} 
\maketitle

\pagenumbering{roman} 
\setcounter{page}{1} 
\tableofcontents
\newpage

\section{Introduction}
\pagenumbering{arabic}
In this article we prove a descent result for affine/projective varieties defined over an algebraically closed field.
The idea is to work with the reduced Groebner basis of the ideal where the variety vanishes and study it's behaviour under group action coming from subgroups of the automorphism group of the base field.

I should mention here that this idea came to me while working on my Masters Thesis Project on a 'p-adic analogue of Narasimhan-Seshadri theorem' under the guidance of Prof. Christopher Deninger at the University of Munster.

\section{What is a Groebner basis?}

We start by introducing the notion of a Groebner basis\footnote{What is a Groebner basis, Bernd Sturmfels, Notices of the AMS}.

Let $k$ be a field and $k[x_{1},\ldots,x_{n}]$ be the polynomial ring in $n$ variables over the field $k$.

\begin{definition}
A \textit{term order} on $k[x_{1},\ldots,x_{n}]$ is a total order $\prec$ on the set of all monomials $x^{a}=x_{1}^{a_{1}}\ldots x_{n}^{a_{n}}$ which has the following two properties:

\begin{enumerate}

\item It is multiplicative i.e. $x^{a} \prec x^{b}$ implies $x^{a+c} \prec x^{b+c}$ for all $a, b, c \in \mathbb{N}^{n}$

\item The constant monomial is the smallest i.e. $1 \prec x^a$ for all $a \in \mathbb{N} \backslash \{0\}$

An example of a term order (for n = 2) is the degree lexicographic order $1 \prec x_{1} \prec x_{2} \prec x_{1}^{2} \prec x_{1}x_{2} \prec x_{2}^{2} \prec x_{1}^{3} \ldots$

\end{enumerate}
\end{definition}

If we fix a term order $\prec$, then every polynomial $f$ has a unique \textit{initial term} $in_{\prec}(f)$. This is the largest monomial with non-zero coefficient in the expansion of $f$. The terms of $f$ are written in $\prec$ decreasing order. 

Suppose now that $I$ is an ideal $k[x_{1},\ldots,x_{n}]$. Then its \textit{initial ideal} $in_{\prec}(I)$ is the ideal generated by the initial terms of all the polynomials in $I$:
\begin{center}
$in_{\prec}(I) = <in_{\prec}(f) : f \in I>$
\end{center}

\begin{definition}
A finite generating set $G$ of $I$ is a $\textit{Groebner basis}$ with respect to the term order $\prec$ if the initial terms of the elements in $G$ suffice to generate the initial ideal:
\begin{center}
$in_{\prec}(I) = <in_(g):g \in G>$.
\end{center}
\end{definition}

Note that there is no minimality requirement for being a Groebner Basis. If $G$ is a Groebner basis for $I$, then any finite subset of $I$ that contains $G$ is also a Groebner basis. To remedy this non-minimality we have the notion of a \textit{reduced Groebner basis}.

\begin{definition}
A Groebner basis $G$ is said to be a \textit{reduced Groebner Basis} if 
\begin{enumerate}
\item For each $g \in G$, the coefficient of $in_{\prec}(g)|g \in G$ is $1$.
\item The set $\{in_{\prec}(g) | g \in G\}$ minimally generates $in_{\prec}(I)$.
\item No trailing term of any $g \in G$ lies in $in_{\prec}(I)$.
\end{enumerate}
\end{definition}

With this definition, we have the following well known theorem-
\begin{theorem}
If the term order $\prec$ is fixed then every ideal $I \in k[x_{1},\ldots,x_{n}]$ has a unique reduced Groebner basis. 
\end{theorem}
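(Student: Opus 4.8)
The plan is to prove existence and uniqueness separately, with uniqueness as the substantive part. The single fact that drives everything is purely combinatorial: the initial ideal $in_{\prec}(I)$ is a monomial ideal, and every monomial ideal $M$ possesses a \emph{unique} minimal set of monomial generators, namely the set of monomials in $M$ that are not divisible by any monomial of $M$ other than themselves. I would first record this as a lemma, deducing it from Dickson's lemma (equivalently, from the Noetherian property of $k[x_{1},\ldots,x_{n}]$): the monomials of $M$ that are minimal under divisibility form a finite set that generates $M$, and any set of monomial generators must contain every divisibility-minimal monomial, so the minimal generating set is forced to be this one.

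For existence, I would start from any Groebner basis $G_{0}$ of $I$ (one exists because $in_{\prec}(I)$ is finitely generated, so finitely many elements of $I$ realize its generators) and autoreduce it in three passes. First, discard any $g$ whose leading term $in_{\prec}(g)$ is divisible by the leading term of another element; what remains is still a Groebner basis, and its set of leading terms is now exactly the unique minimal monomial generating set of $in_{\prec}(I)$. Second, rescale each remaining $g$ so that the coefficient of $in_{\prec}(g)$ is $1$. Third, for each $g$, whenever a trailing term $c\,x^{a}$ of $g$ lies in $in_{\prec}(I)$, we have $x^{a}=x^{b}\,in_{\prec}(g')$ for some other basis element $g'$, and I replace $g$ by $g-c\,x^{b}g'$; this removes that trailing term while leaving $in_{\prec}(g)$ unchanged, since $c\,x^{b}g'$ has leading term $c\,x^{a}\prec in_{\prec}(g)$. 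The output satisfies all three conditions of the definition.

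For uniqueness, let $G$ and $G'$ be two reduced Groebner bases. By condition (2) each of their leading-term sets minimally generates $in_{\prec}(I)$, so by the lemma both coincide with the unique minimal monomial generating set; after relabelling I may write $G=\{g_{1},\ldots,g_{s}\}$ and $G'=\{g_{1}',\ldots,g_{s}'\}$ with $in_{\prec}(g_{i})=in_{\prec}(g_{i}')=m_{i}$. Now set $h_{i}:=g_{i}-g_{i}'\in I$. Because the coefficient of $m_{i}$ equals $1$ in both $g_{i}$ and $g_{i}'$ by condition (1), the leading terms cancel, so either $h_{i}=0$ or $in_{\prec}(h_{i})\prec m_{i}$. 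If $h_{i}\neq 0$, then $in_{\prec}(h_{i})\in in_{\prec}(I)$ since $h_{i}\in I$; but $in_{\prec}(h_{i})$ is a monomial occurring in $g_{i}$ or in $g_{i}'$, and being strictly below $m_{i}$ it is a trailing term of that polynomial, which condition (3) forbids from lying in $in_{\prec}(I)$. This contradiction forces $h_{i}=0$, hence $g_{i}=g_{i}'$ for every $i$ and $G=G'$.

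The main obstacle is the monomial-ideal lemma: once uniqueness of the minimal monomial generating set is established, matching the leading terms and running the cancellation argument is routine. A secondary technical point, needed only for existence, is that the tail-reduction in the third pass terminates; this follows because a term order is a well-ordering on monomials (a consequence of its two defining properties together with Dickson's lemma), so the strictly $\prec$-decreasing sequence of eliminated monomials cannot continue indefinitely.
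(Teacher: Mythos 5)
The paper never actually proves this statement: it is quoted as a ``well known theorem,'' with the burden carried entirely by the cited literature (Sturmfels, Becker--Weispfenning). So there is no in-paper argument for your proposal to parallel or diverge from; what you have done is supply the standard textbook proof, and you have done so essentially correctly. The three load-bearing ideas are all present and correctly deployed: uniqueness of the minimal monomial generating set of a monomial ideal (via Dickson's lemma), which forces the leading-term sets of any two reduced bases to coincide; the cancellation argument $h_{i}=g_{i}-g_{i}'$, where condition (1) kills the leading terms and condition (3) forbids $in_{\prec}(h_{i})$ from lying in $in_{\prec}(I)$, forcing $h_{i}=0$; and well-ordering of $\prec$ to terminate tail reduction. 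This is exactly the argument the paper is implicitly outsourcing to its references, and it is the right one.

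Two points in the existence half need tightening. First, your opening parenthetical only produces a finite subset $G_{0}\subset I$ whose initial terms generate $in_{\prec}(I)$; under the paper's definition a Groebner basis must \emph{in addition} be a generating set of $I$, and that requires the division-algorithm step: divide any $f\in I$ by $G_{0}$; the remainder lies in $I$ and has no term divisible by any $in_{\prec}(g)$, yet if it were nonzero its leading monomial would lie in the monomial ideal $in_{\prec}(I)$ and hence be divisible by some $in_{\prec}(g)$ --- a contradiction, so the remainder is $0$ and $f\in\langle G_{0}\rangle$. Second, your first pass --- ``discard any $g$ whose leading term is divisible by the leading term of another element'' --- fails as literally stated when two elements share the same leading term, since then each divides the other and both would be discarded; the fix is to discard offending elements one at a time. (A smaller cousin of this issue appears in the uniqueness half: your relabelling presumes distinct elements of a reduced basis have distinct leading terms, which follows from applying your own cancellation argument within a single basis.) None of this damages the architecture; all three repairs are routine.
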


\section{Invariance of Groebner basis under isomorphisms}
Let $K / k$ be a field extension and let $\sigma \in Aut_{k}(K)$. Then, $\sigma$ induces an isomorphism of the polynomial ring $k[x_{1},\ldots,x_{n}]$ by acting on the coefficients. In particular $\sigma$ acts on the ideals of the polynomial ring.

We fix a term order $\prec$ on $k[x_{1},\ldots,x_{n}]$. Note that the isomorphism induced by $\sigma$ on the polynomial ring preserves ordering of monomials. 

Let $I$ be an ideal of $k[x_{1},\ldots,x_{n}]$. Let $G = \{f_{1},\ldots,f_{r}\}$ be a generating set for $I$. Let $^{\sigma}I$ be the conjugate ideal and let $^{\sigma}G = \{^{\sigma}f_{1},\ldots,^{\sigma}f_{r}\}$. Note that 
$^{\sigma}G$ is a generating set for $^{\sigma}I$. Then, we have the following lemma-

\begin{lemma}
let $G = \{f_{1},\ldots,f_{r}\}$ be the reduced Groebner basis of $I$ with respect to $\prec$. Then, $^{\sigma}G$ is the reduced Groebner basis for $^{\sigma}I$ with respect to $\prec$.
\end{lemma}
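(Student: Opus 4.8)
The plan is to reduce everything to a single structural observation: because $\sigma$ acts only on the coefficients of a polynomial and fixes every monomial, it commutes with the operation of extracting the initial term. Concretely, I would first record that for any polynomial $f$ the set of monomials occurring with nonzero coefficient in $f$ coincides with the set occurring in $^{\sigma}f$. This is exactly where the hypothesis $\sigma \in Aut_{k}(K)$ is used: $\sigma$ is a bijection of $K$ fixing $0$, so a coefficient $c$ is nonzero if and only if $\sigma(c)$ is nonzero. Since $\sigma$ leaves each monomial untouched, the largest monomial with respect to $\prec$ carrying a nonzero coefficient is the same for $f$ and for $^{\sigma}f$; that is, $in_{\prec}(^{\sigma}f) = in_{\prec}(f)$. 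I would also note that the coefficient of this initial monomial in $^{\sigma}f$ is $\sigma$ applied to the coefficient in $f$, and that the trailing monomials of $^{\sigma}f$ are literally the trailing monomials of $f$.

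From this observation the equality of initial ideals follows immediately. Since $\sigma$ is an automorphism of the polynomial ring, it restricts to a bijection $I \to {}^{\sigma}I$, so
\[
in_{\prec}({}^{\sigma}I) = \langle in_{\prec}(h) : h \in {}^{\sigma}I \rangle = \langle in_{\prec}({}^{\sigma}f) : f \in I \rangle = \langle in_{\prec}(f) : f \in I \rangle = in_{\prec}(I).
\]
Combining this with the fact that $G$ is a Groebner basis of $I$ gives
\[
in_{\prec}({}^{\sigma}I) = in_{\prec}(I) = \langle in_{\prec}(g) : g \in G \rangle = \langle in_{\prec}({}^{\sigma}g) : {}^{\sigma}g \in {}^{\sigma}G \rangle,
\]
and since $^{\sigma}G$ already generates $^{\sigma}I$ (as noted in the setup), this shows $^{\sigma}G$ is a Groebner basis of $^{\sigma}I$ with respect to $\prec$.

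It then remains to check the three conditions in the definition of a reduced Groebner basis, each of which transfers along $\sigma$ essentially for free. The leading coefficient of $^{\sigma}g$ is $\sigma(1) = 1$, so the first condition holds; the minimal generating set $\{in_{\prec}(g) : g \in G\}$ of $in_{\prec}(I)$ is unchanged under $\sigma$ and now minimally generates $in_{\prec}({}^{\sigma}I)$, giving the second; and since the trailing monomials of $^{\sigma}g$ equal those of $g$ while $in_{\prec}({}^{\sigma}I) = in_{\prec}(I)$, the third condition for $G$ forces it for $^{\sigma}G$. Hence $^{\sigma}G$ is a reduced Groebner basis of $^{\sigma}I$, and by the uniqueness asserted in Theorem 1 it is \emph{the} reduced Groebner basis.

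I expect the only real subtlety to be the first paragraph: one must phrase the invariance of the initial term purely in terms of the support of the polynomial, and use genuinely that $\sigma$ is bijective, not merely a ring homomorphism, so that no cancellation or creation of terms can occur under the action. Everything after that is bookkeeping, since each defining property of ``reduced'' is a statement about monomials and their membership in an initial ideal, both of which are preserved verbatim by $\sigma$.
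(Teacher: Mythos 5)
Your proposal is correct and follows essentially the same route as the paper's proof: both rest on the observation that $\sigma$ fixes monomials and acts only on coefficients, and both verify the three conditions of reducedness one by one (leading coefficient via $\sigma(1)=1$, then minimality, then trailing terms). The only cosmetic difference is that you make explicit the identity $in_{\prec}({}^{\sigma}I) = in_{\prec}(I)$ and transfer the minimality and trailing-term conditions verbatim, whereas the paper argues those two conditions by contradiction, pulling a hypothetical violation back through $\sigma^{-1}$.
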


\begin{proof}
We check the three conditions for a generating set to be a reduced Groebner basis step by step.
\begin{enumerate}
\item First condition holds because a field automorphism takes identity to identity. 
\item For the second condition note that the set $\{in_{\prec}(g) | g \in ^{\sigma}G = \{in_{\prec}^{\sigma}g | g \in G\}$ and so generates $in_{\prec}(^{\sigma}I) = in_{\prec}^{\sigma}(I)$.
Further, if a proper subset of $^{\sigma}G$ generates $^{\sigma}I$ then the proper subset of $G$ obtained by applying $\sigma^{-1}$ to these generators gives a generating set for $in_{\prec}(I)$ contradicting that $G$ is a reduced Groebner basis.  
\item This condition follows by the same argument as before. If a trailing term of $g \in ^{\sigma}G$ lies in $in_{\prec}(^{\sigma}I)$ then applying $\sigma^{-1}$ to $g$ we get a trailing term of $^{\sigma^{-1}}g$ in $in_{\prec}(I)$ contradicting that $G$ is a reduced Groebner basis.     
\end{enumerate}
\end{proof}

Next, we will apply this result to affine varieties.

\section{Descent of affine varieties}
Setup:
Let $L/K$ be a field extension, let $L$ be algebraically closed and $A \subset Aut_{K}(L)$ be a subgroup of the automorphism group of $L$ over $K$ such that $K' = L^{A} := \{a \in L | \sigma(a) = a \; \forall \sigma \in L\}$ is the fixed field .

\begin{theorem}
Let $V$ be an affine variety defined over $L$ i.e. $V$ is the vanishing set of a radical ideal $I = (f_{1},\ldots,f_{r})$ generated by finitely many polynomials in $L[X_{1},\ldots,X_{k}]$. Then, $A$ acts on $V$ by acting on the coordinates. Suppose that $V$ is invariant under this action. Then, $V$ is defined over $K'$. 
\end{theorem}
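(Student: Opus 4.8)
The plan is to convert the geometric invariance hypothesis into an algebraic one about the ideal $I$, and then to let the uniqueness of the reduced Groebner basis and the Lemma of the previous section do the work.

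First I would translate ``$V$ is invariant under $A$'' into ``$I$ is invariant under $A$''. Fix $\sigma \in A$. Acting on coordinates, $\sigma$ sends a point $p = (p_{1},\ldots,p_{k})$ to $\sigma(p) = (\sigma(p_{1}),\ldots,\sigma(p_{k}))$, and since $\sigma$ is a ring homomorphism we have the identity $\sigma(f(p)) = {}^{\sigma}f(\sigma(p))$ for every $f \in L[X_{1},\ldots,X_{k}]$. This yields $\sigma(V(I)) = V({}^{\sigma}I)$, so invariance of $V$ reads $V({}^{\sigma}I) = V(I)$. Now $I$ is radical by hypothesis, and ${}^{\sigma}I$ is again radical because $\sigma$ is an automorphism; as $L$ is algebraically closed, the Nullstellensatz lets me recover each of these ideals from its vanishing locus, giving ${}^{\sigma}I = I(V({}^{\sigma}I)) = I(V(I)) = I$. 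Hence $I$ is fixed by every $\sigma \in A$.

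Next I would bring in the reduced Groebner basis. Let $G = \{g_{1},\ldots,g_{m}\}$ be the reduced Groebner basis of $I$ for a fixed term order $\prec$. By the Lemma, ${}^{\sigma}G$ is the reduced Groebner basis of ${}^{\sigma}I$, and since ${}^{\sigma}I = I$ by the previous step, ${}^{\sigma}G$ is a reduced Groebner basis of $I$ itself. By uniqueness (Theorem 1) we conclude ${}^{\sigma}G = G$ as sets.

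The delicate step, and the one I expect to be the real content, is upgrading the \emph{set} equality ${}^{\sigma}G = G$ to the \emph{elementwise} equality ${}^{\sigma}g_{i} = g_{i}$, since it is the latter that pins the coefficients down to the fixed field. Here I would use that $\sigma$ fixes every monomial and sends the leading coefficient $1$ to $1$, so that $in_{\prec}({}^{\sigma}g_{i}) = in_{\prec}(g_{i})$, together with the fact that in a reduced Groebner basis the leading monomials are pairwise distinct (this is condition 2 of the definition, minimal generation of $in_{\prec}(I)$). Thus the leading monomial is a faithful label for the elements of $G$: from ${}^{\sigma}g_{i} \in G$ having the same leading monomial as $g_{i}$, I can force ${}^{\sigma}g_{i} = g_{i}$. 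Consequently every coefficient of every $g_{i}$ is fixed by all of $A$, i.e. $g_{i} \in K'[X_{1},\ldots,X_{k}]$ with $K' = L^{A}$. Since $g_{1},\ldots,g_{m}$ generate $I$ and lie over $K'$, the variety $V = V(g_{1},\ldots,g_{m})$ is cut out by equations over $K'$, which is exactly the assertion that $V$ is defined over $K'$. The only points needing care are the radicality of ${}^{\sigma}I$ (immediate, as automorphisms preserve radicals) and the distinctness of leading monomials in a reduced basis, so the whole argument hinges on having chosen the \emph{reduced} basis rather than an arbitrary one.
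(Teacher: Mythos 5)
Your proposal is correct and follows essentially the same route as the paper's own proof: reduce geometric invariance of $V$ to ${}^{\sigma}I = I$ via the Nullstellensatz, invoke the Lemma plus uniqueness of the reduced Groebner basis to get ${}^{\sigma}G = G$ as sets, and then use distinctness of leading monomials (which $\sigma$ preserves) to upgrade this to ${}^{\sigma}g_{i} = g_{i}$, forcing all coefficients into $K'$. Your write-up is in fact somewhat more careful than the paper's, making explicit the identity $\sigma(f(p)) = {}^{\sigma}f(\sigma(p))$ and the preservation of radicality under $\sigma$, which the paper leaves implicit in its appeal to the ideal--variety correspondence.
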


\begin{proof}
By the correspondence between affine varieties and radical ideals we get that $^{\sigma}I = I$ for all $\sigma \in A$. WLOG we may assume that $G=(f_{1},\ldots,f_{r})$ is a reduced Groebner basis of $I$. (For concreteness assume that we are working with the lexicographic ordering). 

Then, from the lemma and the uniqueness of reduced Groebner basis we get that $^{\sigma}G = G$ for all $\sigma \in A$. 
It may be possible that ${\sigma}$ rearranges the elements of $G$. However, since no two elements of the reduced Groebner basis can have the same leading term this is not possible. So $^{\sigma}G = G$ as an ordered set i.e. $^{\sigma}f_{i} = f_{i}$ for all $i = 1, \ldots, r$. 

This means that all the coefficients of all the polynomials $f_{i}$ are invariant under the action of $A$ and hence actually belong to the fixed field $K'$. 
Therefore, $V$ is actually defined over $K'$. 
\end{proof}

Remarks-
\begin{itemize}
\item This is a quite elementary proof of descent as compared to the classical proofs of Galois Descent\footnote{J. S. Milne, Algebraic Geometry Chapter 16 - Descent Theory}.
Further, our result extends the classical result since we don't need to work with the full Galois group, rather subgroups of the automorphism group work as well. The only thing that needs to be taken into account is the fixed field.

\item The same proof works for descent of projective varieties as well by making use of the correspondence between homogenous radical ideals and projective varieties. Though it is yet to be seen whether this argument can be extended to arbitrary varieties.
\end{itemize} 

The notion of Groebner basis has been widely generalized and extended over the years for e.g. to polynomial over principal ideal rings and even to some classes of non-commutative rings. 

It is possible to extend our lemma to these general situations (though notions of a reduced Groebner basis and uniqueness doesn't always hold). However, their implications to descent problems and algebraic geometric interpretations are yet to be understood properly. So, this will be included in a later update of the article along with some results for arbitrary varieties.

\newpage
\section{Bibliography}

\end{document}